\documentclass[11pt]{amsart}

\usepackage{amssymb,amsrefs,amscd,skak,xcolor}
\usepackage{tikz}
\usepackage{tikz-cd}
\usepackage{bm}
\usetikzlibrary{matrix,arrows}

\usepackage{hyperref}

\usepackage[a4paper,twoside,left=2.8cm,right=2.8cm,top=3.1cm,bottom=2.3cm]{geometry}
\usepackage{lmodern}
\usepackage{multirow}

\BibSpec{article}{%
  +{}{\PrintAuthors}  		{author}
  +{,}{ \textit}     		{title}
  +{,}{ }             		{journal}
  +{}{ \textbf}       		{volume}
  +{}{ \parenthesize} 		{date}
  +{,}{ }      	      		{conference}
  +{,}{ }      	      		{book}
  +{,}{ }            		{pages}
  +{,}{ }            	 	{note}
  +{,}{ }            	 	{status}
  +{, available at}{  \texttt } {eprint}
  +{.}{}              {transition}
}
\BibSpec{book}{%
  +{}{\PrintAuthors}  {author}
  +{,}{ \textit}      {title}
  +{}  { \PrintEditorsB} 		    {editor}
  +{,} { }            {series}
  +{,} { \voltext}    {volume}
  +{,} { }            {publisher}
  +{,}{ }             {date}
  +{,}{ }	      {note}
  +{.}{}              {transition}
}

\BibSpec{collection.article}{%
+{}  {\PrintAuthors}                {author}
+{,} { \textit}                     {title}
+{.} { }                            {part}
+{:} { \textit}                     {subtitle}
+{,} { \PrintContributions}         {contribution}
+{,} { \PrintConference}            {conference}
+{}  {\PrintBook}                   {book}
+{,} { }                            {booktitle}
+{}  { \PrintEditorsB} 		    {editor}
+{,} { }            		    {series}
+{,} { \voltext}    		    {volume}
+{,} { }          		    {publisher}
+{,} { \PrintDateB}                 {date}
+{,} { pp.~}                        {pages}
+{,} { }                            {status}
+{.} {}                             {transition}
}

%
\newtheorem{theorem}{Theorem}[section]
\newtheorem*{theorem*}{Theorem}

\theoremstyle{plain}

\newtheorem{lemma}[theorem]{Lemma}
\newtheorem{proposition}[theorem]{Proposition}

\newtheorem*{lemma*}{Lemma}
\newtheorem*{proposition*}{Proposition}

\theoremstyle{definition}
\newtheorem{definition}[theorem]{Definition}



\newcommand{\NN}{\mathbb{N}}

\newcommand{\RR}{\mathbb{R}}

\newcommand{\EE}{\mathbb{E}}

\newcommand{\calL}{\mathcal{L}}

\newcommand{\calV}{\mathcal{V}}
\newcommand{\calLK}{\mathcal L\mathcal K}
\newcommand \restrict[2]{\left. #1\right|_{#2}}

\newcommand{\inv}{^{-1}}

\newcommand \sS{\mathbb S}
\newcommand \U{\mathbb U}

\DeclareMathOperator{\tube}{tube}

\DeclareMathOperator{\dist}{dist}

\DeclareMathOperator{\vol}{vol}

\DeclareMathOperator{\nc}{nc}
\DeclareMathOperator{\sym}{sym}
\DeclareMathOperator{\abssym}{abssym}
\DeclareMathOperator{\Id}{Id}
\DeclareMathOperator{\PR}{PR}
\DeclareMathOperator{\Gr}{Gr}
\DeclareMathOperator{\Pluck}{{Pl\ddot u}}
\title{On the Gaussian Kinematic Formula of R. Adler and J. Taylor}
\date{\today}
\author {Joseph H.G. Fu}
\email{joefu@uga.edu}
\address{Department of Mathematics, University of Georgia}

\begin{document}

\maketitle
\begin{abstract} We apply methods of algebraic integral geometry to prove a special case of the Gaussian kinematic
formula of Adler-Taylor. The idea, suggested already by Adler and Taylor, is to view the GKF as the limit of spherical
kinematic formulas for spheres of large dimension $N$ and curvature $\frac 1 N$.
\end{abstract}

\section{Introduction}
In this paper we formulate and prove a theorem of R. Adler and J. Taylor about excursion
sets of random functions on the sphere, in terms of the theory of smooth valuations in the sense of
S. Alesker. The valuations that arise are of two types. For simplicity we consider valuations 
to be defined on sets with positive reach (PR sets) throughout.
\subsection{Two families of valuations}
The first type consists of  canonical valuations associated to any smooth Riemannian manifold $M$. Referring to \cite {crm}, 
there exists an algebra $\calLK(M)$ of smooth valuations, generated 
by a single canonical element $t$ such that $t^{\dim M}$ is (up to scale) the volume measure and $t^{\dim M +1}=0$. The various powers $t^k$ 
are the total integrals of the  {\it Lipschitz-Killing curvatures}, in the sense that if $A \subset M$
is a compact submanifold with boundary then
$$
t^k(A) = \int_A \phi_k + \int_{\partial A} \psi_k
$$
for suitable differential forms $\phi_k$, $\psi_k$ that are polynomials in the curvature and connection forms (cf. e.g. (7), (8) of \cite{fu_wan}).
In particular, the multiplicative unit element $t^0= \chi$, the Euler characteristic. These valuations are homogeneous: $t^k(\{rx: x\in A\})= r^kt^k(A)$.

The second type consists of the {\it Gaussian intrinsic volumes} $\gamma_0,\gamma_1,\dots$ on a 
euclidean space $\RR^d$. For measurable $D \subset \RR^d$ put $\gamma_0(D)$ to be the volume of $D$ with respect to the standard unit covariance
Gaussian measure on $\RR^d$:
$$
\gamma_0(D) := \frac 1{\sqrt {2\pi d}} \int_D e^{-\frac {|x|^2} 2}\, dx.
$$
To define $\gamma_i(D)$ for $i>0$ we restrict to  $D\in \PR$. Let $\tube(D,r)$ be the set of points within distance $r$ of $D$. Then
\begin{equation}
\gamma_i(D):= \restrict{\frac{d^i}{dr^i}}{r=0} \gamma_0(\tube(D,r)).
\end{equation}

\subsection{Statement of the theorem}
The {\it canonical process} on the unit sphere $ \{(x_0,\dots, x_n): \sum x_i^2 = 1\}= :\sS_1^n \subset \RR^{n+1}$ is the random function 
\begin{equation}
f(x) = f_\omega(x) = \sum_{i=0}^n x_i \xi_i(\omega)
\end{equation}
where $\xi_0,\dots,\xi_n$ are independent Gaussian random variables with mean zero and 
variance 1.  Let $f_1, \dots, f_d$ be independent copies of the canonical process on $\sS_1^n$, and put
$F:= (f_1,\dots,f_d): \sS_1^n \to \RR^d$.

\begin{theorem}\label{thm_main}
\begin{equation}\label{eq_adler_taylor}
\EE[t^m(A \cap F \inv D])]= \sum_{k} \frac {(\frac \pi 2)^{\frac k 2} }{ k! \omega_k }t^{k+m}(A) \gamma_k(D) 
\end{equation}
for any compact sets $A \in \PR( \sS_1^n), D \in\PR( \RR^d)$.
\end{theorem}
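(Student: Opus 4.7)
The plan, following the strategy indicated in the abstract, is to realize the Gaussian kinematic formula as a limit of spherical kinematic formulas on high-dimensional spheres. I would begin by approximating the Gaussian process $F$ by spherical processes $F_N$: for each large $N$ and each $j=1,\dots,d$, let $\tilde\eta^{(j)}$ be independent uniformly distributed random points on $\sS^{n+N}_{\sqrt{n+N+1}} \subset \RR^{n+N+1}$, let $\xi_N^{(j)} \in \RR^{n+1}$ be the projection of $\tilde\eta^{(j)}$ onto the first $n+1$ coordinates, and set $F_N(x) := (\langle x, \xi_N^{(j)}\rangle)_{j=1}^d$. Poincar\'e's limit theorem gives $\xi_N^{(j)} \to \xi^{(j)}$ in distribution, hence $F_N \to F$ weakly. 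A first technical lemma, resting on uniform regularity of Lipschitz--Killing curvature integrands and almost-sure transversality (so that $F_N^{-1}(D)$ inherits positive reach), would assert
\begin{equation*}
\EE[t^m(A \cap F^{-1}(D))] = \lim_{N\to\infty} \EE[t^m(A \cap F_N^{-1}(D))].
\end{equation*}

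The key geometric observation is that for $x \in \sS_1^n$ viewed as an element of $\sS^{n+N}_1$ via the inclusion $\RR^{n+1} \hookrightarrow \RR^{n+N+1}$, one has $\langle x, \xi_N^{(j)}\rangle = \langle x, \tilde\eta^{(j)}\rangle_{\RR^{n+N+1}}$. Thus the random set $F_N^{-1}(D)$ becomes a purely geometric object on the large sphere, and by rotational invariance of uniform measure the randomness in each $\tilde\eta^{(j)}$ may be replaced by a uniform random rotation $g_j \in SO(n+N+1)$ acting on a fixed reference vector. Applying an appropriate spherical kinematic formula --- the classical formula on $\sS_1^{n+N}$ of the shape $\int_{SO(n+N+1)} t^m(A \cap gB)\, dg = \sum_k \alpha_{m,k}^{(n+N)} t^{m+k}(A) t^{n+N-k}(B)$, or an iterated/product version over $SO(n+N+1)^d$ to handle non-product $D$ --- yields an identity of the form
\begin{equation*}
\EE[t^m(A \cap F_N^{-1}(D))] = \sum_k \alpha_{m,k}^{(n+N)} \, t^{m+k}(A) \cdot \mathcal{J}_N^{(k)}(D),
\end{equation*}
where the constants $\alpha_{m,k}^{(n+N)}$ are explicit and $\mathcal{J}_N^{(k)}(D)$ is the spherical LK datum of a fixed reference set on the large sphere encoding the condition $F_N \in D$. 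Crucially, the factors $t^{m+k}(A)$ are intrinsic to $A$ and so do not depend on the ambient sphere.

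Finally, I would pass to the limit $N \to \infty$. The factor $t^{m+k}(A)$ is constant in $N$; the product $\alpha_{m,k}^{(n+N)} \cdot \mathcal{J}_N^{(k)}(D)$ must be shown to converge to $\frac{(\pi/2)^{k/2}}{k!\,\omega_k}\,\gamma_k(D)$. The quantity $\mathcal{J}_N^{(k)}(D)$ is a spherical LK curvature of (essentially) the preimage of a rescaled copy of $D$ under a spherical projection, and can be written as an integral of curvature forms along projection fibers. The normalized surface measure on the large sphere converges to the standard Gaussian measure on $\RR^d$ (Poincar\'e--Borel), and differentiating the spherical tube formula $k$ times in the tube radius recovers $\gamma_k(D) = \restrict{\frac{d^k}{dr^k}}{r=0} \gamma_0(\tube(D,r))$ in the limit.

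The main obstacle is the constant matching in the final step: both $\alpha_{m,k}^{(n+N)}$ and $\mathcal{J}_N^{(k)}(D)$ involve delicate combinatorial factors (Gamma functions, binomial coefficients, sphere volumes), and demonstrating that their product converges to exactly $(\pi/2)^{k/2}/(k!\,\omega_k)\,\gamma_k(D)$ requires careful asymptotic analysis as the ambient sphere dimension tends to infinity. A secondary technical difficulty is setting up the precise geometric realization for non-product $D$, which may require a product-kinematic version of the spherical formula or an alternative embedding into a single sphere of dimension $d(n+N+1)-1$ in which the entire random configuration appears as a single uniform rotation.
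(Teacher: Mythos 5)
Your outline is the same Adler--Taylor scheme that the paper executes (Poincar\'e limit plus spherical kinematic formulas on spheres of growing dimension), but the two steps you defer as ``technical'' are precisely where the proof lives, and one of them would fail as set up. Taking $d$ \emph{independent} uniform points $\tilde\eta^{(1)},\dots,\tilde\eta^{(d)}$ on the big sphere gives exactly independent coordinate processes but destroys the single-rotation structure: for non-product $D$ the event $F_N(x)\in D$ couples the $d$ rotations, and no kinematic formula $\int_G t^m(A\cap gB)\,dg$ over a single group applies; your proposed remedies (a product kinematic formula, or one sphere of dimension $d(n+N+1)-1$) are not developed and neither obviously produces a fixed set $B$ with $F_N^{-1}(D)=\iota_N^{-1}(g^{-1}B)$ for a single Haar-random $g$. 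The paper resolves this by drawing a \emph{single} $g\in O(N+1)$ and setting $F=\pi_N\circ g\circ\iota_N$, so that $A\cap F^{-1}D$ becomes $\iota_N A\cap g^{-1}D_N$ with $D_N=\Sigma^N\cap\pi_N^{-1}D$ a fixed subset of the sphere of radius $\sqrt N$; the price is that the $d$ relevant rows of $g$ are only \emph{asymptotically} independent, which is exactly why the finite de Finetti theorem of Diaconis--Eaton--Lauritzen ($\Pi_N\to\Pi_\infty$ in $L^1$, with $\Pi_N\le C\,\Pi_\infty$) is needed.

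The second gap is the interchange of limit and expectation. Weak convergence $F_N\Rightarrow F$ together with ``uniform regularity and a.s.\ transversality'' does not yield $\EE[t^m(A\cap F_N^{-1}D)]\to\EE[t^m(A\cap F^{-1}D)]$, because the integrand is an unbounded curvature integral and one needs uniform integrability across $N$. The paper's main technical novelty is a kinematic \emph{inequality} for total absolute curvatures, $\int_G|\tau_k(A\cap gB)|\,dg\le \frac{\alpha_N}{2^{N+1}}\sum_{i+j=N+k}|\tau_i|(A)\,|\tau_j|(B)$, combined with careful asymptotics of $|\sigma_j|(D_N)$ as $N\to\infty$, to get a uniform $L^1$ bound and then dominated convergence. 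Nothing in your sketch substitutes for this. A smaller but real issue: on the sphere the kinematic operator is not diagonal in the powers of $t$ as your displayed shape $\sum_k \alpha^{(n+N)}_{m,k}t^{m+k}\otimes t^{n+N-k}$ suggests; the basis dual to $u^k$ must be computed in the $\sigma$-basis, and identifying its limit with $\frac{(2\pi)^{k/2}}{k!\,\omega_k}\gamma_k(D)$ requires uniform bounds on $\nu_k(D_N)$ so that Maclaurin coefficients of $r\mapsto\gamma_0(\tube(D,r))$ can be extracted from the tube-volume identity in the limit --- pointwise convergence of tube volumes alone is not enough.
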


In fact this theorem is a special case of a much broader theorem \cite{adl_tay_book} of Adler-Taylor. Our proof of the special case follows their outline \cite{adl-tay}, which is based on a beautiful 
connection with spherical integral geometry via ``Poincar\'e's limit" (cf. \eqref{eq_poincare} below).
Our contribution  is to apply methods of algebraic 
integral geometry to execute their proof scheme.

%
%

\subsection{Acknowledgements} I wish to thank Robert Adler and Gil Solanes for helpful 
discussions. I am also extremely grateful to Andreas Bernig, Julius Ross and Thomas Wannerer 
for the opportunity to speak on this subject at the
 workshop they organized at the  MFO in November 2023, which provided a crucial  spur to complete this paper.
 
 I am grateful as well to support from the Simons Foundation.
 
\section{Notation and basic facts}
The unit ball of radius $r$ in $\RR^k$ is denoted $B^k_r$.
We denote the sphere $\partial B^{k+1}_r$  of dimension $k$ and radius $r$ by $\sS^k_r$ and
put
$$
\Sigma^N:= \sS^N_{\sqrt N}.
$$
We will use the notation
$$
\Sigma^N_r
$$
to denote a geodesic ball of radius $r$ in $\Sigma^N$. 

Throughout the paper we will fix the dimension $d$ of the euclidean space that includes $D$. For positive functions $a, b$, we write
$$
a\precsim b
$$
if there is a constant $ C>0 $ (depending only on $d$ and $D$) such that 
$$a < Cb ,
$$
and 
$$
a\asymp b \iff a\precsim b \precsim a.
$$
If $a= a(n), b = b(n) , \ n\in \NN$, we write
$$
a\sim b 
$$
if
$$
\lim_{n\to \infty } \frac {a(n)}{b(n)} = 1.
$$
Thus $a\sim b \implies a\asymp b$.

We will rely heavily on Stirling's formula
\begin{equation}
\Gamma(k+1) \sim \sqrt{2\pi k }   \left(\frac k e\right)^k
\end{equation}
and the classical limit
$$
 \lim_{k\to \infty}\left(1+ \frac x k\right)^k = e^x
$$
where the sequence increases monotonically if $x >0$.
We obtain an asymptotic expression  for the volume $\omega_n$ of the unit ball in $\RR^n$
\begin{equation}\label{eq_omega}
\omega_n = \frac {\pi^{\frac n 2}}{\Gamma(\frac n 2 + 1)}
\sim {\frac 1 {\sqrt{\pi n}}} \left(\frac{2\pi e}n\right)^{\frac n 2}  \implies \frac{\omega_n}{\omega_{n-1}}\sim \sqrt{
\frac{2\pi}n}
\end{equation}
and of the $n$-dimensional unit sphere
\begin{equation}
\alpha_n:= (n+1) \omega_{n+1} \sim \sqrt 2 \left(\frac{2\pi e} n\right)^{\frac n 2}
\end{equation}
as well as  the binomial coefficients
\begin{equation}\label{eq_binom} 
\binom n k
 \asymp  \frac{{n^{n+\frac 1 2}} }{{k^{k+\frac 1 2}} {(n-k)^{n-k+\frac 1 2}} }\precsim
\left(\frac {ne} k\right)^k\sqrt{\frac{n}{k(n-k)}}
\end{equation}

Recall also from \cite{kl-ro} the values of the intrinsic volumes (cf. Section \ref{sect_canon} below) of the 
unit euclidean ball of dimension $N$:
\begin{align}\label{eq_intrinsic_vol}
\mu_k(B^N_1) & = \frac {\omega_N }{\omega_{N-k}}\binom N k\\
\notag& 
\sim \frac 1 {\sqrt{2\pi k}}\left(\frac e k\right)^k (2\pi N)^{\frac k 2}
\end{align}
for fixed $k$ as $N \to \infty$, and $\precsim $  in general.

\section{The scheme}
\subsection{Another Poincar\'e limit}\label{sec_approx} It will be helpful to view the matter in terms of probability densities on the space $\calL$  of linear maps $\RR^{n+1}\to \RR^d$, since the maps $F$
are restrictions of such maps to $\sS^n_1$. Identifying $\calL$ with the euclidean space of $(n+1) \times d$ real matrices, put $dF$ for the standard Lebesgue measure on $\calL$ and set $\Pi_\infty$ for the standard Gaussian density on $\calL$ centered at $0$ and with  covariance $I_d \otimes I_{n+1}$. The random map $F \in \calL$  of  Theorem \ref{thm_main} is chosen
according to the distribution $\Pi_\infty$, and may be written
$$
\RR^{n+1} \owns y\mapsto \left(\sum_iy_i\xi_{i1} ,\dots,\sum_iy_i\xi_{id}\right)
$$
where the $\xi_{ij}$ are independent centered Gaussian random variables with unit variance.

The analytic core of the proof scheme of \cite{adl-tay} rests on the following approximation to $\Pi_\infty$. 
For each $N \ge n$, consider the  density $\Pi_N$ on $\calL$ given as the image of the Haar probability measure on $O(N+1)$ under the map that takes $g \in O(N+1) $ to $\sqrt N$ times the upper left $d\times (n+1)$ block of $g$:
$$
g\mapsto \sqrt N\left(\begin{matrix}g_{10}& g_{11}& \dots & g_{1n}\\
g_{20}& g_{21}& \dots & g_{2n}\\
\dots & \dots &\dots & \dots \\
g_{d0}& g_{d1}& \dots & g_{dn}\\
\end{matrix}\right)
$$
Then  $\Pi_N\to \Pi_\infty$ in a sense that we will make precise in Section \ref{sect_Pi_converge} below. Denote by $\EE_N$ the expectation with respect to the distribution $\Pi_N$, and by $\EE_\infty$ the expectation with respect to $\Pi_\infty$. The first  step in proving Theorem \ref{thm_main} is to show
for compact PR subsets $A \subset \sS^n_1$ and $D\subset \RR^d$
\begin{equation}\label{eq_fund_limit}
\lim_{N\to \infty} \EE_N[t^m(A \cap F\inv(D))]= \EE_\infty [t^m(A \cap F\inv(D))].
\end{equation}

\subsection{Enter the spherical kinematic formula} \label{sect_sphere} 
Consider the embedding $\iota_N:\sS^{n}_1 \hookrightarrow \Sigma^N$ given by
$$\iota_N (y_0,\dots, y_n) := (\sqrt N y_0,\dots, \sqrt N y_n, 0\dots, 0 ) \in \RR^{N+1}.$$
Put $\Sigma^N \subset \RR^{N+1}$ for the sphere of curvature $\frac 1 N$ (i.e. radius $\sqrt N$) and set
\begin{equation}
D_N:= \Sigma^N \cap \pi_N\inv D
\end{equation}
where $\pi_N$ is the orthogonal projection $\RR^{N+1}\to \RR^d$.
Then the restriction to $\sS^n_1$ of the  random map $F$, selected according to the distribution $\Pi_N$, is
$$
F= \pi_N \circ g \circ \iota_N
$$
so  if $dg$ is the probability Haar measure on $O(N+1)$ then for any PR set $A \subset \sS_1^n$
\begin{align}
\notag\EE_N [t^m(A \cap F\inv D)]] &= \int_{O(N+1)} t^m(A\cap \iota_N\inv g\inv\pi_N\inv D) \, dg\\
\notag&= \int_{O(N+1)} t^m(\iota_N\inv(\iota_NA\cap  g\inv\pi_N\inv D)) \, dg\\
\label{eq_KF1} &=N^{-\frac m 2} \int_{O(N+1)} t^m(\iota_N A\cap  g\inv D_N) \, dg\\
\notag&= 2^m\int_{O(N+1)} \left(\frac t{\sqrt {4N}}\right)^m(\iota_N A\cap  g\inv D_N) \, dg.
\end{align}

\begin{definition}
It will be convenient to set
\begin{equation}
u:= \frac t {\sqrt{4N}}.
\end{equation}
\end{definition}

Denote by $\calV^N$  the $(N+1)$-dimensional space of $O(N+1)$-invariant valuations on $\Sigma^N$.  
The kinematic operators $k_N: \calV^N \to \calV^N\otimes \calV^N$ from \cite{crm} are given with respect to the normalization of the  Haar measure $dg$ on  $O(N+1)$ determined by 
$$
dg(\{g: g o \in E\})= \vol (E) , \quad E \subset \Sigma^N.
$$ 
Thus
$$
p_N:= N^{-\frac N 2}  \alpha_{N}\inv k_N
$$
is the kinematic operator corresponding to
 the probability Haar measure. That is, if $\mu \in \calV^N$ and $dg_{prob}$ is the probability Haar measure
 then we write 
 $$
 p_N(\mu)= \sum_i \alpha_i \otimes \beta_i
 $$
 to mean 
 $$
 \int_{O(N+1)} \mu(A \cap gB) \, dg_{prob} = \sum_i \alpha_i(A) \beta_i(B) 
 $$
 where the $\alpha_i,\beta_i \in \calV^N$.
 
 Thus we may rewrite \eqref{eq_KF1} as
 \begin{equation}\label{eq_KF2}
\EE_N [t^m(A \cap F\inv D) ] = 2^mp_N(u^m)(\iota_N A,  D_N)  .
\end{equation}

Since $p_N(\chi)$ is a nonsingular element of $\calV^N\otimes \calV^N$ (\cite{crm}), there is a basis $\nu_0,\nu_1,\dots,\nu_N $ for $\calV^N$ such that 
\begin{equation}\label{eq_pN_chi}
p_N(\chi) = \sum_k u^k\otimes \nu_k
\end{equation}
and by multiplicativity  
$$
p_N(u^m)= \sum_k u^{m+k}\otimes \nu_k
$$
For notational simplicity  we have muted  the fact that the $\nu_k$ depend on $N$.
Thus \eqref{eq_KF1} becomes
\begin{align*}
\EE_N[t^m(A\cap F\inv D)]&= 2^m p_N(u^m)(\iota_NA, D_N) \\
&= 2^m\sum_k  u^{m+k}(\iota_N A) \nu_k(D_N)\\
&= \sum_k  2^{-k}t^{m+k}(A) \nu_k(D_N)
\end{align*}
The sum is finite since $A \subset \sS^n_1$, and therefore $t^i(A) =0$ for $i>n$.

The second and concluding step in proving Theorem \ref{thm_main} will be to show
 \begin{equation}\label{eq_limit_of_nu}
\lim_{N\to\infty} \nu_k(D_N) =  \frac {( 2 \pi )^{\frac k2}}{k! \omega_k }\gamma_k(D)
\end{equation}
 for each fixed $k$.
 
 \section{Valuations and kinematic formulas on $\Sigma^N$} 
 The proofs of both \eqref{eq_fund_limit} and \eqref{eq_limit_of_nu} rely on a detailed understanding of spherical kinematic formulas. The account below is based on \cite{crm}.

 \subsection{Canonical bases for $\calV^N$}\label{sect_canon} Recall from \cite{crm} the canonical bases 
 \begin{equation}
 \phi^i , \ t^i, \ \mu_i, \ \tau_i , \quad 0\le i \le N 
\end{equation}
for $\calV^N$. The first two are Alesker powers of 
\begin{equation}
\phi := \sqrt{4N} \int_{O(N+1)} \chi( \cdot \cap g \sS_{\sqrt N}^{N-1})\, dg
\end{equation}
where $\sS_{\sqrt N}^{N-1}$ is a great hypersphere of $\Sigma^N$, and of the canonical generator
\begin{equation}\label{eq_phi_to_t}
t= \frac \phi{\sqrt{1- \frac{\phi^2}{4N}}}
\end{equation}
of the Lipschitz-Killing algebra of $\Sigma^N$, respectively. The $\mu_i$ are the classical 
intrinsic volumes, viz. the scalar multiples of the respective $t^i$ indicated in the relation
\begin{equation}\label{eq_t_mu}
e^{\pi t} = \sum_i \omega_i \mu_i.
\end{equation}
They satisfy the relations
\begin{equation}\label{eq_mu1}
\dim A =k \implies \mu_k(A) = \vol_k(A)
\end{equation}
and
\begin{equation}\label{eq_product}
\mu_k(A\times B) = \sum_{i+j=k}\mu_i(A)\mu_j(B)
\end{equation}
for a metric Cartesian product $A \times B$.

 The fourth basis is most readily described 
by the identity 
\begin{equation}
\frac 1 2 \tau_i(\sS_{\sqrt N}^{j}) =  (4N)^{\frac i 2} \delta^i_j
\end{equation}
(cf. \cite{crm}, p. 92) where $\sS_{\sqrt N}^{j}\subset \Sigma^N$ is a great $j$-sphere. Explicitly,
\begin{equation}\label{eq_def_tau}
\tau_i (A) = 2^{i+1} \alpha_i\inv \alpha_{N-i-1}\inv \int_{\partial A} \sym_{N-i-1} \, d\vol_{N-1}
\end{equation}
for smooth domains $A \subset \Sigma^N$, where $\sym_j$ denotes the $j$th elementary symmetric 
function of the principal curvatures of $\partial A$.
It is convenient also to define
\begin{equation}\label{eq_def_sigma}
\sigma_i:= \frac {\tau_{N-i}}{(4N)^{\frac {N-i} 2}}=  2{N^{-\frac {N-i} 2}}\alpha_{N-i}\inv \alpha_{i-1}\inv \int_{\partial A} \sym_{i-1} \, d\vol_{N-1}.
\end{equation}

These bases are related by  \eqref{eq_phi_to_t} and
\begin{align}
\label{eq_t_to_phi}\phi&= \frac t{\sqrt{1+ \frac{t^2}{4N}}}= \sqrt{4N} \frac u{\sqrt{1+ u^2}}\\
\tau_i&= \phi^i\left(1- \frac {\phi^2}{4N}\right) 
\iff \sigma_{N-i} =\frac{u^i}{\left(1+u^2\right)^{\frac i 2 + 1}} 
\end{align}
In order to invert the last relation we compute (cf. \cite{wilf})
\begin{align}
\notag u^k&= \frac{u^k}{(1+u^2)^{\frac k 2 +1}}\left(\frac 1{1-\frac{u^2}{1+u^2}}\right)^{\frac k 2 +1}\\
\label{eq_sigma_to_u}&= \frac{u^k}{(1+u^2)^{\frac k 2 +1}}\left(\sum_j \binom{j+\frac k 2} j \left(\frac{u^2}{1+u^2}\right)^j\right)\\
\notag &=\sum_j \binom{j+\frac k 2} j \sigma_{N- (k+2j)}
\end{align}


\subsection{Some kinematic formulas} From \cite{crm} we find that the probability kinematic operator $p_N$ is given by 
\begin{equation}\label{eq_tau_KF}
p_N(\tau_k) = 2^{-N-1}N^{-\frac N 2} \sum_{i+j = N+ k} \tau_i \otimes \tau_j
\end{equation}
or
\begin{equation}
p_N(\sigma_k) = \frac 12 \sum_{i+j =  k} \sigma_i \otimes \sigma_j.
\end{equation}
In the $\phi$ basis,
\begin{equation}
p_N(\phi^k) = 2^{-N-1}N^{-\frac N 2}  \sum_{i+j= N+k} \phi^j \otimes \tau_i.
\end{equation}
In particular, recalling \eqref{eq_t_to_phi},
\begin{equation*}
p_N(\chi) = \frac   12\sum_{i} \left({\frac u{\sqrt{1+u^2}}}\right)^{i} \otimes \sigma_i
\end{equation*}

From this formula we may express the $\nu_k$ from \eqref{eq_pN_chi} in terms of the $\sigma_j$, as follows. Introduce the 
formal symbol $x$ so that  $x^i $ corresponds to $\sigma_i$, and regard $p_N(\chi)$ as an element of the
 ring of polynomials in $x$ with coefficients in $\calV^N$ (recall that $u^k=0=\sigma_k$ for $k >N$). Then
\begin{align*}
2p_N(\chi) &= \sum_{i} \left({\frac {ux}{\sqrt{1+u^2}}}\right)^{i}  \\
&=  \left(1- \frac {ux}{\sqrt{1+u^2}} \right)\inv\\
& = [{(1+u^2)  + (u x\sqrt{1+u^2} )}][{ 1 -u^2 (x^2-1)}]\inv\\
& =  [(1+u^2)  + (ux \sqrt{1+u^2})  ] \sum_k u^{2k} (x^2-1)^k\\
&= 1 +  \sum_{k=1}^\infty u^{2k} x^2(x^2-1)^{k-1}\\
&\quad + \left(\sum_j\binom {\frac 1 2 } j u^{2j+1} x \right)\left(\ \sum_k u^{2k} (x^2-1)^k\right).
\end{align*}

%
We conclude that 
\begin{align}
\notag 2\nu_0 &= \sigma_0 \\
\label{eq_nu}2\nu_{2k}&= \sum_{j=0}^{k-1} (-1)^j\binom {k-1} j \sigma_{2j+2}, \quad k \ge 1  \\
\notag 2\nu_{2k +1}&= \sum_{j=0}^k \binom {\frac 1 2} j \sum_{i=0}^{k-j} (-1)^{k-j-i} \binom{k-j} i \sigma_{2i+1}.
\end{align}

\section{Proof of \eqref{eq_fund_limit}}
\subsection{A kinematic inequality} The proof of \eqref{eq_fund_limit} relies on the inequality \eqref{eq_kin_ineq} below, which appears to be new.
The proofs of Lemma \ref{lem_geodesic} and of Theorem \ref{thm_abs_KF} will appear elsewhere \cite{fu_kin_ineq}. 

Given a smooth Riemannian manifold $M$, $n:= \dim M$, and a PR subset $A \subset M$,  we define the functionals $|\tau_i|(A), \ i = 0,\dots,N$, as follows. For clarity let us begin with the case where $A$ is a $C^{1,1}$ domain. For
$p \in \partial A$ and each $j$, set $\abssym_j(p)$ equal to the $j$th
elementary symmetric function in the {\it absolute values} of the principal curvatures of $\partial A$ at $p$. Then
\begin{equation}
|\tau_i|(A) := \int_{\partial A} \abssym_{n-i-1}(p) \, d\vol_{n-1}p
\end{equation}
Clearly $|\tau_i|(A) = \tau_i(A)$ if $A $ is geodesically convex.

We may extend this construction to general $A \in PR$ as follows. Recall that the unit tangent bundle $\U M$ is a contact manifold, and that the contact 
distribution $V_{\bar p}$ at each $\bar p\in \U M$ is canonically isomorphic  to $T_pM\oplus T_pM$ where $p = \pi_{M}(\bar p)$ as a symplectic
vector space. 
The normal cycle $\nc(A)$ of a PR set $A\subset M$ is given by
integration over a Legendrian Lipschitz submanifold $\operatorname{nor}(A)\subset \U M$. This implies that each tangent space $T_{\bar p} \operatorname{nor}(A)$ is a Lagrangian subspace of $V_{\bar p}$ (if $A$ is a $C^2$ body then this space is the graph of the shape operator of 
$\partial A$ at $p$). Let $m$ be the measure given by restricting Hausdorff $(n-1)$-dimensional measure
to $\operatorname{nor}(A)$.

Put  $\Gr_{n-1}^+(W)$ for the  Grassmannian of oriented  $(n-1)$-dimensional subspaces of a Euclidean space $W$, and 
$$
\Pluck: \Gr_{n-1}^+(W)\to \sS(\wedge^{n-1}W)
$$
for the spherical  Pl\"ucker embedding (i.e. into the unit sphere of $\wedge^{n-1}W$ rather than
the projective space), i.e.
$$
\Pluck V:= v_1\wedge\dots \wedge
v_{n-1},
$$
 where $v_1,\dots,v_{n-1}$ is a positively oriented orthonormal basis for $V$.
Then
\begin{equation}\label{eq_measure_decom}
\nc(A) = m\wedge \Pluck (T_{\bar p} \operatorname{nor}(A))
\end{equation}
in the sense that for any $(n-1)$-form $\phi$
$$
\int_{\nc(A)}\phi = \int \langle\Pluck (T_{\bar p} \operatorname{nor}(A)), \phi(\bar p)\rangle \, dm.
$$

\begin{lemma}
Let $W$ be an oriented Euclidean space of dimension $m$ and $\xi \in \bigwedge^m (W\oplus W)$ a simple
Lagrangian $m$-vector. Then for each $k=0,\dots, m$ there are pairwise orthogonal $k$-dimensional subspaces
$F_{k,i}, \ i = 0,\dots, I_k$ and constants $c_{k,i} \in \RR$ such that 
\begin{equation}\label{eq_lag_decomp}
\xi =  \sum_{k=0}^m \sum_{i= 0}^{I_k} c_{k,i} \cdot \Pluck(F_{k,i} \oplus F_{k,i}^\perp).
\end{equation}
Each quantity 
\begin{equation}
\parallel \xi\parallel_k := \sum_{i= 0}^{I_k} |c_{k,i}|
\end{equation}
is well-defined, independent of the decomposition \eqref{eq_lag_decomp}.
%
%
\end{lemma}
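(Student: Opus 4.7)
The plan is to produce the decomposition \eqref{eq_lag_decomp} from a canonical normal form for Lagrangian subspaces, and then to characterize $\|\xi\|_k$ as an intrinsic invariant of the underlying Lagrangian plane.

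First I would reduce to Pl\"ucker form by writing $\xi = s \cdot \Pluck(L)$, where $L \subset W \oplus W$ is the oriented Lagrangian plane carried by $\xi$ and $s = \pm|\xi|$. Next I would apply the principal-angles normal form to the pair $(L, W \oplus 0)$: there exist an orthonormal basis $e_1, \dots, e_m$ of $W$ and angles $\theta_1, \dots, \theta_m \in [0, \pi/2]$ such that, setting $\bar e_j := (e_j, 0)$ and $\bar f_j := (0, e_j)$, the vectors $\cos\theta_j\, \bar e_j + \sin\theta_j\, \bar f_j$ form an orthonormal basis of $L$. The multiset $\{\theta_j\}$ is intrinsic to $L$, since the $\cos^2\theta_j$ are the eigenvalues of the self-adjoint operator $(\pi_1|_L)^* (\pi_1|_L)$ on $L$, $\pi_1$ being the projection $W \oplus W \to W$ onto the first factor.

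Then I would expand
$$
\xi = s \bigwedge_{j=1}^m \bigl(\cos\theta_j\, \bar e_j + \sin\theta_j\, \bar f_j\bigr)
$$
and group the resulting $2^m$ monomials by the subset $I \subseteq \{1, \dots, m\}$ of indices selecting the $\bar e_j$ factor, obtaining
$$
\xi = \sum_{k=0}^m \sum_{|I| = k} s\, \varepsilon_I \Bigl(\prod_{j \in I} \cos\theta_j\Bigr)\!\Bigl(\prod_{j \notin I} \sin\theta_j\Bigr)\, \Pluck(F_I \oplus F_I^\perp),
$$
where $F_I := \linspan\{e_j : j \in I\}$ has dimension $k$ and $\varepsilon_I \in \{\pm 1\}$ accounts for reordering. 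This establishes \eqref{eq_lag_decomp}. Moreover, the $F_I$'s arising for fixed $k$ are coordinate subspaces with respect to a common orthonormal basis of $W$, so their Pl\"ucker images $\Pluck(F_I \oplus F_I^\perp)$ form an orthonormal subset of $\bigwedge^m(W \oplus W)$. Since all $\cos\theta_j, \sin\theta_j \ge 0$, the sum formula yields
$$
\|\xi\|_k = |s| \sum_{|I| = k} \prod_{j \in I} \cos\theta_j \prod_{j \notin I} \sin\theta_j,
$$
a symmetric function of the principal angles, hence an invariant of $L$ alone.

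The main obstacle is ruling out other admissible decompositions that might yield a different value of $\sum_i |c_{k,i}|$. I would handle this via the canonical grading
$$
\bigwedge^m(W \oplus W) = \bigoplus_{k=0}^m \bigwedge^k W \otimes \bigwedge^{m-k} W,
$$
under which each $\Pluck(F \oplus F^\perp)$ with $\dim F = k$ lies in the $k$th summand as $\Pluck(F) \otimes \Pluck(F^\perp)$. Hence any decomposition of the form \eqref{eq_lag_decomp} must recover, term by term, the $k$th graded component $\xi_k$ of $\xi$; the orthogonality hypothesis on the $F_{k,i}$'s forces the corresponding rank-one summands of $\xi_k$ to be pairwise orthogonal in $\bigwedge^k W \otimes \bigwedge^{m-k} W$, and the $\ell^1$-sum of coefficients in any such orthogonal rank-one expansion equals the nuclear norm of $\xi_k$ viewed, via the Hodge star, as a self-adjoint tensor on $\bigwedge^k W$. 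This intrinsic characterization gives the well-definedness of $\|\xi\|_k$.
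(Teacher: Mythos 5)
Your construction is essentially the paper's: the principal-angle normal form of $L$ relative to $W\oplus 0$ is exactly the diagonalization of the self-adjoint map $\lambda$ whose graph, together with $\{0\}\oplus V^\perp$, makes up $L$ (the angles $\theta_j=\pi/2$ account for the $V^\perp$ part and $\tan\theta_j$ are the eigenvalues of $\lambda$), and your coefficients $\prod_{j\in I}\cos\theta_j\prod_{j\notin I}\sin\theta_j$ are precisely the ``terms of the elementary symmetric functions of the eigenvalues'' that the paper invokes. Your grading/nuclear-norm argument for independence of the decomposition is a genuine addition: the paper's proof only exhibits the canonical decomposition and leaves well-definedness implicit, whereas you actually show that any admissible expansion of the $k$th graded component is a spectral decomposition of the same self-adjoint tensor. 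One correction: you cannot arrange all $\theta_j\in[0,\pi/2]$. The Lagrangian condition is what forces the second component of the $j$th principal vector to be a multiple of $(0,e_j)$ with the \emph{same} $e_j$, and replacing $e_j$ by $-e_j$ flips both components at once; hence a negative eigenvalue of $\lambda$ forces $\cos\theta_j$ and $\sin\theta_j$ to have opposite signs, and your closed formula for $\|\xi\|_k$ must read $|s|\sum_{|I|=k}\prod_{j\in I}|\cos\theta_j|\prod_{j\notin I}|\sin\theta_j|$. This is still a symmetric invariant of $L$, so the conclusion is unaffected.
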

\begin{proof}
Let $E\subset W \oplus W$ be the Lagrangian subspace  corresponding to $\xi$, and $V \subset W$ its projection to 
the first $W$ factor. 
Then $E$ is the orthogonal direct sum of $\{0\} \oplus V^\perp$ with the graph of a self-adjoint linear map 
$\lambda: V\to V$.  The  $c_{k, i} $ are then the terms of
$k$th elementary symmetric functions of the eigenvalues of $\lambda$, multiplied by a constant depending only on
the Euclidean norm of $\xi$.
\end{proof}

Choosing orientations appropriately and comparing with \eqref{eq_def_tau} and \eqref{eq_measure_decom}, we see that 
\begin{equation}
\tau_k(A) =2^{k+1} \alpha_k \inv \alpha_{N-k-1}\inv \int \sum_i c_{k,i} \, dm
\end{equation}
and we put
\begin{equation}
|\tau_k|(A) =2^{k+1} \alpha_k \inv \alpha_{N-k-1}\inv \int \sum_i |c_{k,i} | \, dm = 2^{k+1} \alpha_k \inv \alpha_{N-k-1}\inv \int \parallel \xi_{\bar p} \parallel_k \, dm.
\end{equation}
Define the $|\sigma_k|$ similarly. Obviously $|\tau_k(A)|\le |\tau_k|(A)$.

The definitions of these functionals  obviously depend on the embedding of $A$ into $M$. For this reason let us  (for a brief moment) incorporate this into the notation by writing $|\tau^M_k|$.
\begin{lemma}\label{lem_geodesic}
If $ M\hookrightarrow N$ is a totally geodesic embedding of Riemannian manifolds then $|\tau^M_k|(A) = |\tau^N_k|(A)$ for PR sets $A \subset M$.
\end{lemma}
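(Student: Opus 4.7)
My plan is to unwind the definition of $|\tau_k|$ in terms of the normal cycle and to exploit the totally geodesic hypothesis to give $\operatorname{nor}^N(A)$ a very explicit fibered structure over $A$ and $\operatorname{nor}^M(A)$. I would first treat the case where $A$ is a $C^{1,1}$ domain of top dimension $m:=\dim M$ in $M$, and then extend to general PR sets by applying the same local analysis pointwise along $\operatorname{nor}^M(A)$.

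Write $c:=\dim N - m$. The totally geodesic hypothesis yields a local splitting in which $\operatorname{nor}^N(A)$ decomposes into an \emph{interior} piece over $A^\circ$, consisting of unit vectors in $T_pM^\perp$ (an $\sS^{c-1}$-fibration), together with a \emph{boundary} piece over $\partial A$ whose fiber at $q\in\partial A$ is the closed hemisphere
\[
H_q = \{\cos\theta\cdot n_M(q) + \sin\theta\cdot w : \theta\in[0,\pi/2],\ w\in\sS^{c-1}(T_qM^\perp)\},
\]
where $n_M(q)$ is the outward normal to $\partial A$ inside $M$. A direct differentiation shows that the Lagrangian tangent to the interior piece at $(p,v)$ is $T_pM \oplus (T_pM^\perp\cap v^\perp)$, i.e.\ of the form $V\oplus V^\perp$ with $V=T_pM$ — a Lagrangian with vanishing shape operator — so $\|\xi\|_k$ is zero on the interior piece unless $k=m$. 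On the boundary piece at $(q,v)$ with $v=\cos\theta\,n_M+\sin\theta\,w$, the totally geodesic hypothesis makes $\nabla_X n_M=-\ell(X)$ for $X\in T_q\partial A$, where $\ell$ is the shape operator of $\partial A$ in $M$, so that the induced shape operator on $V=T_q\partial A$ comes out to be exactly $\cos\theta\cdot\ell$. Expanding as in the preceding Lemma then gives $\|\xi\|_k=(\cos\theta)^{m-1-k}\abssym_{m-1-k}(\ell)$.

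Integrating over $H_q$ with volume element $(\sin\theta)^{c-1}d\theta\wedge d\vol_{\sS^{c-1}}$ produces a Beta-function integral
\[
\alpha_{c-1}\int_0^{\pi/2}(\cos\theta)^{m-1-k}(\sin\theta)^{c-1}\,d\theta \;=\; \frac{\Gamma\bigl(\tfrac{m-k}{2}\bigr)\,\pi^{c/2}}{\Gamma\bigl(\tfrac{n-k}{2}\bigr)},
\]
and the identity $\alpha_j=2\pi^{(j+1)/2}/\Gamma(\tfrac{j+1}{2})$ shows in one line that this is exactly the ratio $\alpha_{n-k-1}/\alpha_{m-k-1}$ required to convert the normalization prefactor of $|\tau_k^N|$ into that of $|\tau_k^M|$. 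For $k<m$ the interior piece contributes nothing, so the boundary-piece integral alone delivers $|\tau_k^N|(A)=|\tau_k^M|(A)$; the case $k=m$ becomes a volume identity produced by the interior piece.

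The main obstacle I expect is the extension from smooth domains to general PR sets, since the Lagrangian $T_{\bar p}\operatorname{nor}^M(A)$ need not be a full graph and the ``interior'' and ``boundary'' parts must be tracked simultaneously in a single local model. My plan here is to choose an orthonormal frame adapted to the decomposition \eqref{eq_lag_decomp} of the $M$-Lagrangian and then to verify that the induced $N$-Lagrangian admits a compatible decomposition in which each summand has its shape operator rescaled by the same factor $\cos\theta$, so that the Beta-function cancellation above goes through summand by summand.
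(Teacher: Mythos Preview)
The paper does not actually prove Lemma~\ref{lem_geodesic}: immediately before the statement it says ``The proofs of Lemma~\ref{lem_geodesic} and of Theorem~\ref{thm_abs_KF} will appear elsewhere \cite{fu_kin_ineq},'' and no argument is given in the present text. So there is no in-paper proof to compare your proposal against.

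That said, your approach is the natural one and the computation for a $C^{1,1}$ domain $A\subset M$ is correct. The decomposition of $\operatorname{nor}^N(A)$ into the interior piece (an $\sS^{c-1}$-bundle over $A^\circ$) and the boundary hemisphere piece is right; the identification of the Lagrangian tangent on the boundary piece with shape operator $\cos\theta\cdot\ell$ uses the totally geodesic hypothesis exactly where it is needed (so that $\nabla_X w\in T_qM^\perp$ contributes nothing to $\lambda$); and your Beta-integral identity
\[
\alpha_{c-1}\int_0^{\pi/2}(\cos\theta)^{m-1-k}(\sin\theta)^{c-1}\,d\theta=\frac{\alpha_{n-k-1}}{\alpha_{m-k-1}}
\]
is precisely the normalization needed to pass from $|\tau^N_k|$ to $|\tau^M_k|$. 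The $k=m$ volume case also checks, since on the boundary piece $\dim V=m-1$ forces $\|\xi\|_m=0$.

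The part that is genuinely incomplete is the extension to arbitrary PR sets. Your stated plan is on the right track, but you should be aware that for general $A$ the set $\operatorname{nor}^N(A)$ is not simply an ``interior piece over $A^\circ$'' plus a ``boundary piece over $\partial A$'': rather, the normal cone in $N$ at $p$ is $\operatorname{Nor}^M(A,p)\oplus T_pM^\perp$, so $\operatorname{nor}^N(A)$ is a spherical join of $\operatorname{nor}^M(A)$ with the normal sphere bundle $\sS(TM^\perp)|_A$. At a point $(p,v)\in\operatorname{nor}^M(A)$ where the $M$-Lagrangian has projection $V\subsetneq v^\perp$ (the non-graph case), the fiber over $(p,v)$ is still a hemisphere, and the induced $N$-Lagrangian at $(p,\cos\theta\,v+\sin\theta\,w)$ has the same projection $V$ with $\lambda$ rescaled by $\cos\theta$, while the vertical part picks up the extra $c$ directions from $\theta$ and $w$. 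The same Beta cancellation then goes through; making this precise (and handling the measure-zero overlap where the hemisphere meets the $\sS^{c-1}$ fibers) is the only real work remaining.
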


We then have the following kinematic inequality, mirroring \eqref{eq_tau_KF}:
\begin{theorem}\label{thm_abs_KF}  Let $M$ be a space form of dimension $N$ and $G$ its isometry group, equipped with 
a Haar measure $dg$ so that for measurable subsets $E \subset M$
$$
dg(\{g: g o \in E\})= \vol (E) .
$$
 Given PR sets $A,B \subset M$,
\begin{equation}\label{eq_kin_ineq}
\int_{G} |\tau_k(A \cap gB)|\,  dg \le \frac{\alpha_N}{2^{N+1}}\sum_{i+j =N+k} |\tau_i|(A) \  |\tau_j|(B).
\end{equation}
\end{theorem}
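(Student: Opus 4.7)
The plan is to imitate the proof of the signed kinematic formula \eqref{eq_tau_KF} in the $\tau$-basis, but to invoke the Lagrangian decomposition from the preceding lemma at each step in order to convert the signed Cauchy-type identities into inequalities. First, by a standard smooth approximation argument, I reduce to the case that $A, B$ are $C^{1,1}$ domains, so that $|\tau_k|(\cdot)$ is computed by the genuine boundary integral of $\abssym_{N-k-1}$ of the principal curvatures. A Sard-type transversality argument then shows that for almost every $g \in G$, the boundaries $\partial A$ and $g\partial B$ meet transversally along an $(N-2)$-dimensional submanifold $C_g$, so it suffices to establish the estimate under integration over this generic set.

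For such transversal $g$, I decompose
\[
\partial(A \cap gB) = (\partial A \cap gB)\ \cup\ (A \cap g\partial B),
\]
the two pieces meeting precisely along $C_g$. Away from $C_g$ the shape operator of $\partial(A \cap gB)$ agrees with that of $\partial A$ or of $g\partial B$, so the respective contributions to $|\tau_k|(A\cap gB)$ are direct restrictions of the integrands defining $|\tau_k|(A)$ and $|\tau_k|(gB)$. Along $C_g$ the normal cycle $\nc(A\cap gB)$ sweeps out the angular wedge between the two outward conormals; the tangent Lagrangian at each sweep point is built from the restrictions of the shape operators of $\partial A$ and $g\partial B$ to $TC_g$, together with a $1$-dimensional angular factor. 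Applying Cauchy's identity
\[
\sym_\ell(K_A \oplus K_{gB}) = \sum_{i+j=\ell} \sym_i(K_A)\,\sym_j(K_{gB})
\]
termwise with the triangle inequality, and bounding the $\sum_i |c_{k,i}|$ supplied by the lemma by the resulting sum of products, yields a pointwise estimate on the integrand for $|\tau_k|(A\cap gB)$ in terms of the $\abssym_i(K_A)$ and $\abssym_j(K_{gB})$.

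Finally, I integrate over $G$. By the coarea/Fubini argument that underlies the signed formula \eqref{eq_tau_KF}, the double integral
\[
\int_G \int_{\partial A \cap g\partial B} \abssym_i(K_A)\,\abssym_j(K_{gB})\,(\text{angle factor})\,d\vol\,dg
\]
factors as $|\tau_i|(A) \cdot |\tau_j|(B)$ times an angular integral over the unit tangent sphere of $M$, and this angular integral combines with the normalizing constants in \eqref{eq_def_tau} to produce exactly the prefactor $\alpha_N/2^{N+1}$ known from the signed formula. Lemma \ref{lem_geodesic} enters here to identify the intrinsic boundary integrals on $\partial A$ and $g\partial B$ with their ambient counterparts in $M$. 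The hard part will be the bookkeeping at the corner $C_g$: one must verify that the triangle inequality distributes correctly across both Cauchy's identity and the Lagrangian decomposition of the lemma, and confirm that no sign cancellations used in the signed version are required to land on precisely the constant $\alpha_N/2^{N+1}$ on the right-hand side of \eqref{eq_kin_ineq}.
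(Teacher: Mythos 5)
You should note at the outset that the paper does not actually prove Theorem \ref{thm_abs_KF}: the text states explicitly that the proofs of Lemma \ref{lem_geodesic} and Theorem \ref{thm_abs_KF} ``will appear elsewhere'' (the reference \cite{fu_kin_ineq}, listed as in preparation). So there is no proof in the paper to compare against, and your proposal has to stand on its own.

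On its own terms, your outline follows the standard template for proving kinematic formulas (decompose $\nc(A\cap gB)$ into the part over $\partial A\cap \operatorname{int}(gB)$, the part over $\operatorname{int}(A)\cap g\partial B$, and the wedge over the corner $C_g=\partial A\cap g\partial B$, then Fubini over $G$), but the step that carries the entire content of the theorem is asserted rather than proved. The identity $\sym_\ell(K_A\oplus K_{gB})=\sum_{i+j=\ell}\sym_i(K_A)\sym_j(K_{gB})$ is the formula for a block-diagonal operator acting on complementary subspaces, and that is not the situation at the corner: the restrictions of the two second fundamental forms both act on the \emph{same} $(N-2)$-dimensional space $T C_g$, and they are combined through the angle between the two conormals and the position along the normal arc, together with one extra principal direction transverse to $C_g$. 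The expansion of $\sym_\ell$ of this combined shape operator produces mixed terms with angle-dependent, sign-changing coefficients; the product structure $\tau_i(A)\tau_j(B)$ in the signed formula emerges only \emph{after} integrating over the arc, over $g$, and (implicitly) over the Grassmannian of hyperplanes $TC_g\subset T_p\partial A$, using specific angular integrals. To get the absolute version with the \emph{same} constant $\alpha_N/2^{N+1}$ you must show that, after taking absolute values termwise, these angular and Grassmannian integrals are still bounded by the signed ones --- and this is exactly the point you defer as ``bookkeeping.'' It is not bookkeeping; it is the theorem. (Your reduction to $C^{1,1}$ domains also needs an argument --- outer approximation by tubes plus Fatou on the left side and continuity of $|\tau_i|$ on the right is the natural route --- and note that it suffices to bound $\int_G|\tau_k|(A\cap gB)\,dg$, since $|\tau_k(C)|\le|\tau_k|(C)$, which is what you are implicitly doing.) As written, the proposal is a program for a proof, not a proof.
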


If $M= \Sigma^N$ and $G = O(N+1)$ equipped with the probability Haar measure then this becomes
\begin{equation}\label{eq_kin_ineq_sigma}
\int_{O(N+1)} |\sigma_k(C \cap gD)|\,  dg \le \frac 1 2 \sum_{i+j =k} |\sigma_i|(C) \  |\sigma_j|(D).
\end{equation}

{\bf Remark.} The functionals $|\tau_i|$ may be regarded as {\it total absolute curvature measures}.
A different, closely related, and seemingly more natural family of  total absolute curvature measures 
was introduced by M. Z\"ahle in \cite{zahle} (if $i=0$ then they both agree, up to scaling, with the 
total  curvature of \cite{chern-lashof}). It would be worthwhile to clarify the relation between
the two families. It also seems possible that a formally more natural  kinematic inequality might be stated 
in terms of Z\"ahle's functionals.

\subsection{Estimating $|\sigma_k|(D_N)$}  
Replacing $D$ by a tubular neighborhood of small radius and using a limiting argument, we may assume that that $D$
has $C^{1,1}$ boundary.

\begin{lemma}
\label{lem_asymp_vol}
$$\vol_{N-1}(\partial D_N)\asymp \notag  (2\pi e)^{\frac N 2}.$$
\end{lemma}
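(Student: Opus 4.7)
The plan is to compute $\vol_{N-1}(\partial D_N)$ via the coarea formula applied to the restriction to $\partial D_N$ of the projection $\pi_N$. Since $D$ is compact, for $N$ large the set $\partial D_N = \Sigma^N\cap\pi_N\inv(\partial D)$ is a smooth fiber bundle over $\partial D$ whose fiber over $x\in\partial D$ is the Euclidean sphere $\{x\}\times\sS^{N-d}_{\sqrt{N-|x|^2}}\subset\{x\}\times\RR^{N+1-d}$, with $(N-d)$-dimensional volume $\alpha_{N-d}(N-|x|^2)^{(N-d)/2}$.

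First I would carry out the Jacobian computation for $\pi_N|_{\partial D_N}$. Extracting at a point $p=(x,y)$ the horizontal subspace $T_p(\partial D_N)\cap(\ker d\pi_N)^\perp$ (which is $(d-1)$-dimensional and consists of vectors $(u,-(x\cdot u)y/|y|^2)$ with $u\in T_x\partial D$) and comparing orthonormal bases, one obtains
$$
J_{\pi_N|_{\partial D_N}}(x,y) = \frac{\sqrt{N-|x|^2}}{\sqrt{N-(x\cdot \nu(x))^2}},
$$
where $\nu(x)$ is the unit outward normal to $\partial D$ at $x$. Since $D$ is compact, $|x|$ and $|x\cdot\nu(x)|$ are bounded uniformly in $x\in\partial D$, so this Jacobian is $\asymp 1$ for large $N$. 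The coarea formula then gives
$$
\vol_{N-1}(\partial D_N) \asymp \alpha_{N-d}\int_{\partial D}(N-|x|^2)^{(N-d)/2}\, d\vol_{d-1}(x).
$$

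Second, I would combine Stirling in the form $\alpha_n\sim\sqrt 2\,(2\pi e/n)^{n/2}$ from Section~2 with the standard limit $(N/(N-d))^{(N-d)/2}\to e^{d/2}$ to conclude $\alpha_{N-d}\,N^{(N-d)/2}\asymp(2\pi e)^{N/2}$. Pulling the factor $N^{(N-d)/2}$ into the integral transforms the integrand into $(1-|x|^2/N)^{(N-d)/2}$, which is bounded above by $1$ and, because the sequence is monotonically increasing in $N$ to $e^{-|x|^2/2}$, bounded below by a positive constant depending only on $d$ and $D$ once $N$ is sufficiently large. Hence the remaining integral over $\partial D$ is $\asymp 1$ and the claim follows.

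The only mildly delicate step is arranging the Jacobian computation cleanly; once $J\asymp 1$ is established, the rest reduces to the sphere-volume asymptotic together with compactness of $D$. It is worth noting that the matching lower bound falls out of the same calculation essentially for free, even though only the upper bound $\precsim(2\pi e)^{N/2}$ is likely to be what drives the subsequent kinematic estimates for $|\sigma_k|(D_N)$.
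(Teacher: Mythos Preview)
Your argument is correct and follows essentially the same fibration idea as the paper: both view $\partial D_N$ as a bundle over $\partial D$ with spherical fibers of radius $\sqrt{N-|x|^2}$ and reduce to the asymptotic $\alpha_{N-d}N^{(N-d)/2}\asymp(2\pi e)^{N/2}$. The paper compresses all of this into the single unjustified line $\vol_{N-1}(\partial D_N)\sim\vol_{d-1}(\partial D)\,\vol_{N-d}(\sS^{N-d}_{\sqrt N})$, whereas you supply the coarea/Jacobian computation that actually underlies it (and your version is in fact more accurate, since the true limiting constant involves $\int_{\partial D}e^{-|x|^2/2}$ rather than $\vol_{d-1}(\partial D)$---irrelevant for $\asymp$, but worth noting).
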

\begin{proof}
\begin{align*}
\notag \vol_{N-1}(\partial D_N)&\sim \vol_{d-1}(\partial D) \vol_{N-d} (\sS_{\sqrt N}^{{N-d}})\\
\notag &\asymp \alpha_{N-d} N^{\frac{N-d}2} \\
&\asymp  \left(\frac{2\pi e}{N-d}\right)^{\frac {N-d}2}N^{\frac{N-d}2}\\
&\asymp (2\pi e)^{\frac N 2}.
\end{align*}
\end{proof}

Let $N$ be large enough that $D \subset B^d_{\sqrt N}$. Given  $p \in \partial D$, we wish to estimate the 
 second fundamental form $II_{p;q}$ of $\partial D_N$ at a general point
 $(p;q) \in \pi_N\inv(p) \subset \Sigma^N\subset \RR^d \times \RR^{N+1-d}$.   Note first that the tangent space to $\Sigma^N$ is
\begin{equation}
T_{(p;q)} \Sigma^N = (p^\perp\times \{ 0\})\oplus \ell \oplus (\{0\} \times q^\perp) \subset
\RR^{d} \times \RR^{N-d+1}
\end{equation}
where $\ell$ denotes the 1-dimensional space spanned by  $ (-|q|^2p; |p|^2 q)$ (if $p =0$ then $\ell$ is subsumed into
the first factor).
The tangent space $T_{(p;q)} D_N $ is  clearly  the direct sum of the summand $Q:= \{0\} \times q^\perp $ with a 
codimension 1 subspace $P$ of the sum of the first two summands.
%
Then the second fundamental form splits as 
\begin{equation}
II_{p;q} = II_P \oplus r \Id_Q
\end{equation}
where $II_P$ is the second fundamental form at $p$ of $\partial D$ with respect to the metric \eqref{eq_alt_metric} below, and
$$
r = \frac{|p|}{|q|\sqrt N}\asymp \frac{|p|}N
$$
(this is is the geodesic curvature of a latitude curve at distance $|p|$ away from the equator in a 2-sphere 
of radius $\sqrt N$).
Clearly 
$$
II_P \sim II_{D,p} ,
$$
the second fundamental form of $\partial D$ at $p$, as $N\to \infty$. We conclude that $D_N \subset \Sigma^N$ has $d-1$ principal curvatures that are $\sim$ those of $D$, and $N-d$ that are $\precsim N\inv$. Therefore 
\begin{equation}
 \abssym_{k} \precsim \sum_{j=0}^{d-1} \binom{d-1} j \binom{N-d}{k-j} N^{-(k-j)} .
 \end{equation}
 For $k<d-1$ this is $\precsim 1$, while for $k\ge d-1$ we obtain
 \begin{align}
\label{eq_abssym_est} \abssym_{k} \precsim& \frac 1{(k-d+1)!}
\\
\notag&\asymp \frac {(2\pi e)^{k-d+1}}{(k-d+1)^{k-d+\frac 3 2}} \\
\notag & \precsim k^d \left(\frac {2\pi e}{k}\right)^k
\end{align}
Meanwhile
\begin{equation}
|\sigma_{k}|(D_N)= 2 N^{\frac{k-N}2} \alpha_{k-1}\inv \alpha_{N-k}\inv \int_{\partial D_N} \abssym_{k-1}
\end{equation}
where the coefficient satisfies 
\begin{align}
\notag 2 N^{\frac{k-N}2}  \alpha_{k-1}\inv \alpha_{N-k}\inv &\precsim
N^{\frac{k-N}2}  \frac{(N-k)^{\frac {N-k} 2}(k-1)^{\frac {k-1}2}}{(2\pi e)^{\frac N 2}}
\\
\label{eq_ck} &
< \frac{k^{\frac {k}2}}{(2\pi e)^{\frac N 2}}
\end{align}
With \eqref{eq_abssym_est} and Lemma \ref{lem_asymp_vol} we thus estimate 
\begin{equation}\label{eq_sigma_est}
|\sigma_k|(D_N)\precsim k^{d-\frac{k} 2}(2\pi e)^k 
\end{equation}

\subsection{Completion of the proof of \eqref{eq_fund_limit}} \label{sect_Pi_converge} Recalling \eqref{eq_KF1} we show first that the 
absolute expectations 
\begin{equation}\label {eq_L1_est}
\EE_N[|t^m(A \cap F\inv D)|]= 2^m \int_{O(N+1)} \left|u^m(\iota_N A \cap g D_N)\right|\, dg, \quad m\le n
\end{equation}
are bounded (the integrand vanishes if $m>n$ since $A \subset \sS_1^n$). By \eqref{eq_kin_ineq_sigma},
\begin{equation}\label {eq_L1_sigma_est}
  \int_{O(N+1)} \left|\sigma_k  (\iota_N A \cap g D_N)\right|\, dg 
  \le \frac 1 2 \sum_{i+j=k} |\sigma_i| (\iota_NA) |\sigma_j|(D_N).
\end{equation}
By \eqref{eq_sigma_to_u} it is enough to show that these integrals 
 are bounded for $k\ge N-n$. 
But $|\sigma_i| (\iota_NA)= |\tau_{N-i}|(A) $, which moreover vanishes if $i < N-n$. Thus $j\le n$ in every nonzero term 
of the sum, so the bound \eqref{eq_sigma_est} yields \eqref{eq_L1_sigma_est} and thereby also 
\eqref{eq_L1_est}.

Put for conciseness $z(F):=t^m(A \cap F\inv D), \ F \in \calL$. Recalling the notation of Section \ref{sec_approx} we have just shown
the existence of a constant $C$ such that
\begin{equation}\label{eq_unif_bound}
\int_{\calL} |z(F) |\, \Pi_N(F) \, dF \le C <\infty
\end{equation}
for all $N$.
By  \cite{diac_et_al}
\footnote{The first point is Prop. 2.2, op. cit. The second point follows from  (33) of the Appendix, op. cit.},
\begin{itemize}
\item $\Pi_N\to \Pi_\infty $ in $L_1(\calL)$, hence (possibly taking a subsequence) pointwise a.e., and
\item there is a constant $C$ such that $\Pi_N \le C \Pi_\infty$ for all $N$.
\end{itemize}
With \eqref{eq_unif_bound}, the first point implies that $\int_{\calL} |z(F) |\, \Pi_\infty(F) \, dF  <\infty$ by 
Fatou's Lemma. The second point
thus implies that we may apply the dominated convergence theorem, with dominating function $C |z|\Pi_\infty$, to 
conclude that 
\begin{equation}
\lim_{N\to \infty } \int_{\calL} z(F) \, \Pi_N(F) \, dF  = \int_{\calL} z(F) \, \Pi_\infty(F) \, dF 
\end{equation}
as claimed.

 \section{Proof of \eqref{eq_limit_of_nu}}

\subsection {Tubes in $\RR^d$ versus tubes in $\Sigma^N$}

If we endow the open ball $B^d_{\sqrt N}$ with the metric
\begin{equation}\label{eq_alt_metric}
ds_N^2 := dx^2+ \frac{\langle x, dx\rangle^2}{N-|x|^2}
\end{equation}
then the projection map $\pi_N: \Sigma^N \to \RR^d$
becomes a Riemannian submersion off of the equator. Restricted to any compact subset of $\RR^d$, the metrics $ds_N^2 \to dx^2$ in the $C^\infty$ topology. It follows that $D$ has uniformly positive reach with respect to $ds_N^2$ among those $N$ for which $D \subset B^d_{\sqrt N}$, and that the distance functions
$\delta_N(y):= \dist_N(y,D)$ converge locally uniformly to $\delta(y):= \dist(y,D)$ and
\begin{align*}
\Sigma^N \supset \tube (D_N,r) & = \pi_N\inv (\tube_N(D,r))
\end{align*}
where $\tube_N$ denotes the tubular neighborhood with respect to \eqref{eq_alt_metric}.

Put $\rho_N$ for the density of the projection under $\pi_N$ of the uniform probability  measure on $\Sigma^N$.
What we have referred to as Poincar\'e's limit  (easily derived using the coarea formula) states that
\begin{equation}\label{eq_poincare}
\rho_N(x)\to (2\pi)^{-\frac d 2} e^{-\frac{|x|^2}2}
\end{equation}
locally uniformly for $x \in \RR^d$ as $N\to \infty$.
It follows that for 
each $r >0$
\begin{equation}
 \int_{\tube_N(D,r)} \rho_N(x)\, dx \to \int_{\tube(D,r)} (2\pi)^{-\frac d 2} e^{-\frac{|x|^2}2}\, dx = \gamma_0(\tube(D, r)).
\end{equation}

To complete the proof of \eqref{eq_limit_of_nu} we recall \eqref{eq_pN_chi} and
note that since the $D_N$ have uniformly positive reach $\ge r_0>0$
\begin{equation}\label{eq_tube}
\vol_{\Sigma^N}(\tube_{\Sigma^N}(D_N,r))= p_N(\chi)(\Sigma^N_r, D_N) =  \sum_{k=0}^N u^k(\Sigma^N_r) \cdot \nu_k(D_N)
\end{equation}
for all $0\le r\le r_0$, $\Sigma^N_r$ denotes the geodesic ball of radius $r$ in $\Sigma^N$.


\subsection {Intrinsic volumes of geodesic balls in $\Sigma^N$} 
We wish to show that this last sum
is approximately a truncated power series, with radius of convergence bounded below independent 
of $N$. 

\begin{proposition}\label{prop_growth} Let $\Sigma_r^N$ denote the geodesic ball of radius $r$ in $\Sigma^N$. Then for fixed $k$
\begin{equation}
u^k(\Sigma_r^N) 
\sim\frac{\omega_k}{ \left(2\pi  \right)^{\frac k 2}}  r^k 
\end{equation}
 locally uniformly in $r$ as $N\to \infty$, and
\begin{equation}
u^k(\Sigma_r^N) \precsim \left(\frac {e}k\right)^{\frac {k+1} 2}  r^k
\end{equation}
for $0<r<1$.

\end{proposition}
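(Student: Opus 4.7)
The plan is to expand $u^k(\Sigma_r^N)$ via \eqref{eq_sigma_to_u} and compute each $\sigma$-term in closed form, exploiting that $\partial\Sigma_r^N$ is a round hypersphere of $\Sigma^N$ with constant principal curvatures. Specifically, $\partial\Sigma_r^N$ has dimension $N-1$, radius $\sqrt N\sin(r/\sqrt N)$, and every principal curvature equal to $\cot(r/\sqrt N)/\sqrt N$, so $\sym_{i-1} = \binom{N-1}{i-1}(\cot(r/\sqrt N)/\sqrt N)^{i-1}$ is constant on $\partial\Sigma_r^N$. Substituting in \eqref{eq_def_sigma} and collecting factors of $\sqrt N$ yields
\begin{equation*}
\sigma_i(\Sigma_r^N) = 2\binom{N-1}{i-1}\frac{\alpha_{N-1}}{\alpha_{N-i}\alpha_{i-1}}\sin^{N-i}(r/\sqrt N)\cos^{i-1}(r/\sqrt N),
\end{equation*}
and hence $u^k(\Sigma_r^N) = \sum_j \binom{j+k/2}{j}\sigma_{N-k-2j}(\Sigma_r^N)$.

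For fixed $k,j$, applying the Stirling asymptotics \eqref{eq_omega} to the binomial coefficient and to $\alpha_{N-1}/\alpha_{N-k-2j-1}$, together with $\sin(r/\sqrt N)\sim r/\sqrt N$ and $\cos^{N-k-2j-1}(r/\sqrt N)\to e^{-r^2/2}$, delivers the termwise limit
\begin{equation*}
\sigma_{N-k-2j}(\Sigma_r^N) \longrightarrow \frac{2(2\pi)^{(k+2j)/2}}{(k+2j)!\,\alpha_{k+2j}}\, r^{k+2j}\, e^{-r^2/2},
\end{equation*}
locally uniformly in $r$. To pass the limit through the sum I need a uniform-in-$N$ summable majorant: the estimates $\sin s\le s$ and $\cos s\le 1$, combined with Gamma-ratio inequalities of the form $\Gamma(N/2+1)/\Gamma((N-k-2j)/2+1)\gtrsim((N-k-2j)/2)^{(k+2j)/2}$ (valid for $N\ge k+2j+1$), bound each $\sigma$-term by a constant multiple of its limit, and summability in $j$ then follows from the decay of $\binom{j+k/2}{j}/((k+2j)!\,\alpha_{k+2j})$.

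The limit series collapses in closed form via Legendre's duplication formula. Writing $\alpha_{k+2j}=2\pi^{(k+2j+1)/2}/\Gamma((k+2j+1)/2)$ and $(k+2j)!=2^{k+2j}\Gamma((k+2j+1)/2)\Gamma((k+2j)/2+1)/\sqrt\pi$, the combination $\binom{j+k/2}{j}/((k+2j)!\,\alpha_{k+2j})$ reduces to $1/(2^{k+j}\pi^{k/2}\,j!\,\Gamma(k/2+1))$. Extracting $\omega_k r^k/(2\pi)^{k/2}$ then turns the remaining sum into $\sum_j(r^2/2)^j/j!=e^{r^2/2}$, which cancels the ambient $e^{-r^2/2}$, leaving $\omega_k r^k/(2\pi)^{k/2}$ as claimed.

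For the uniform bound on $0<r<1$, the same termwise estimates yield $u^k(\Sigma_r^N)\le C\,\omega_k r^k e^{r^2/2}/(2\pi)^{k/2}$; since $e^{r^2/2}\le \sqrt e$ and Stirling applied to $\omega_k=\pi^{k/2}/\Gamma(k/2+1)$ gives $\omega_k/(2\pi)^{k/2}\precsim (e/k)^{(k+1)/2}$, the bound follows. The principal technical obstacle is securing the uniform Gamma-ratio estimates needed to justify dominated convergence in the infinite sum; once those are in hand, the duplication-formula collapse of the series and the concluding size bound are routine.
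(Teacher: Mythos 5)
Your argument is correct in outline and is genuinely different from the paper's. The paper sidesteps all direct curvature computations on the geodesic sphere: it writes $\Sigma^N_r$ together with a flat disk $C^N_r$ of radius $\sqrt N\sin(r/\sqrt N)$ as the boundary of the convex cap $L^N_r=B^{N+1}_{\sqrt N}\cap\{x_0\ge\sqrt N\cos(r/\sqrt N)\}$, then uses the parity identity $2t^k(M)=t^k(\partial M)$ (for $k+\dim M$ odd) in one parity case and monotonicity of intrinsic volumes under inclusion of convex bodies, $C^N_r\subset L^N_r\subset I^N_r\times C^N_r$, in the other; everything then reduces to the known asymptotics \eqref{eq_intrinsic_vol} of $\mu_k(B^N_1)$. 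Your route instead computes every $\sigma_i(\Sigma^N_r)$ exactly from the constant principal curvatures of $\partial\Sigma^N_r$ and sums the series \eqref{eq_sigma_to_u}; this is heavier on Gamma-function bookkeeping but more self-contained (it never invokes \eqref{eq_t_mu}, \eqref{eq_product}, or convexity) and yields exact formulas for all the basis valuations evaluated on geodesic balls, which the paper's argument does not.

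Concerning the step you flag as the principal obstacle: the domination does go through, and the cleanest way to see it is to apply Legendre duplication \emph{before} estimating. With $m=k+2j$ one has
\begin{equation*}
\binom{N-1}{m}\,\frac{\Gamma\bigl(\tfrac{N-m}{2}\bigr)}{\Gamma\bigl(\tfrac N2\bigr)}
=\frac{2^m}{m!}\cdot\frac{\Gamma\bigl(\tfrac{N+1}{2}\bigr)}{\Gamma\bigl(\tfrac{N-m+1}{2}\bigr)}
\le\frac{2^m}{m!}\left(\frac{N+1}{2}\right)^{\frac m2},
\end{equation*}
where now the larger argument sits in the numerator, so the ratio is bounded by an elementary product (plus log-convexity for the half-integer step) with no blow-up as $m$ approaches $N$. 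Combined with $\sin s\le s$ and $\cos s\le1$ this gives $\sigma_{N-m}(\Sigma^N_r)\le 2\sqrt e\,(2\pi)^{m/2}r^m/(m!\,\alpha_m)$ uniformly in $N$, i.e.\ each term is at most $\sqrt e\,e^{r^2/2}$ times its limit, and the majorant series converges by exactly the duplication computation you use to collapse the limit. Two small repairs: your displayed reduction of $\binom{j+k/2}{j}/((k+2j)!\,\alpha_{k+2j})$ is off (the correct value is $\bigl(2^{k+2j+1}\pi^{(k+2j)/2}\,j!\,\Gamma(\tfrac k2+1)\bigr)^{-1}$; check it at $k=j=0$, where the ratio is $1/\alpha_0=1/2$), although the sum you then write down, $\sum_j(r^2/2)^j/j!=e^{r^2/2}$, and the resulting limit $\omega_kr^k/(2\pi)^{k/2}$ are both right, so this is a transcription slip rather than an error. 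Also, when $N-k$ is even the series \eqref{eq_sigma_to_u} ends with a $\sigma_0$ term, which is a normalized volume rather than a boundary integral and falls outside your closed-form formula; it needs a one-line separate estimate (the relative volume of a cap of radius $r<1$ is super-exponentially small in $N$, easily beating the polynomial growth of $\binom{j+k/2}{j}$).
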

\begin{proof}
Observe first that by \eqref{eq_t_mu} and \eqref{eq_intrinsic_vol}
\begin{equation}\label{eq_exact_tk}
u^k(B^N_1) \begin{cases}\sim \frac{\omega_k}{ \left(2\pi  \right)^{\frac k 2}}  \quad  {\rm for \ fixed} \ k\\
\precsim \left(\frac e k\right)^{\frac {k+1} 2}.
\end{cases}
\end{equation}

Turning to the matter at hand, we treat separately the cases where $k,N$ have the same or opposite parity, making use of the fact that 
\begin{equation}\label{eq_bdry}
2 t^k(M) =  t^k(\partial M)
\end{equation}
if $M$ is a compact manifold with boundary and $k+\dim M $ is odd.
In particular, $t^k(M)=0$ if $\partial M = 0$ in this case.

Put $L^N_r:= B^{N+1}_{\sqrt N}\cap \{(x_0,\dots,x_N): x_0 \ge \sqrt N \cos \frac r{\sqrt N}\}$ . Thus 
$L^N_r$ is the intersection of the ball with a half space, specifically a convex body with boundary equal to the union of $\Sigma^N_r $ with an $N$-dimensional disk $C^N_r$ of radius ${\sqrt N\sin \frac {r}{\sqrt N}}$, intersecting in the sphere $\partial C^N_r$ of dimension $N-1$.

Suppose first that $k$ and $N$ have opposite parity. Then \eqref {eq_bdry} gives
\begin{align*}
0 = t^k(\partial L^N_r) &= t^k(\Sigma^N_r ) + t^k( C^N_r )- t^k (\partial C^N_r) \\
&= t^k(\Sigma^N_r ) -t^k( C^N_r )
\end{align*}
from which the desired estimates follow using \eqref{eq_exact_tk}.

If $k$ and $N$ have the same parity, note first that
\begin{equation*}
C^N_r\subset L^N_r\subset   I^N_r  \times C^N_r
\end{equation*}
where $I^N_r$ is the interval $[\sqrt N \cos \left(\frac r{\sqrt N}\right), \sqrt N]$, so by
monotonicity of $\mu_k$ under inclusions of convex sets
\begin{equation}\label{eq_inclusions}
\mu_k(C^N_r)\le \mu_k(L^N_r)\le\mu_k( I^N_r  \times C^N_r).
\end{equation}
Since by \eqref{eq_bdry}
$$\mu_k(L^N_r) = \frac 1 2 \mu_k(\partial L^N_r)= \frac 1 2 (\mu_k(\Sigma^N_r)+ \mu_k(C^N_r)- \mu_k(\partial C^N_r))=  \frac 1 2 (\mu_k(\Sigma^N_r)+ \mu_k(C^N_r))
$$
and by \eqref{eq_mu1} and \eqref{eq_product}
\begin{align*}
  \mu_k(C^N_r\times I^N_r) 
&= \mu_k(C^N_r) \mu_0(I^N_r) + \mu_{k-1}(C^N_r) \mu_1(I^N_r) \\
&=
\mu_k(C_r^N) + \mu_{k-1}(C_r^N) \sqrt N\left(1- \cos \left(\frac r{\sqrt N}\right)\right)\\
&\le \mu_k( C^N_r) + \frac {r^2} {\sqrt N} \mu_{k-1}( C^N_r)
\end{align*}
we find that 
\eqref{eq_inclusions} yields
$$
\mu_k( C^N_r) \le 
\mu_k(\Sigma^N_r) \le \mu_k(C^N_r) + \frac {2r^2} {\sqrt N} \mu_{k-1}( C^N_r)
$$
Multiplying by $\frac {\omega_k k!}{(4N)^{\frac k 2} \pi^k}$,
$$u^k(C^N_r) \le 
u^k(\Sigma^N_r) \le 
u^k( C^N_r) +\frac{r^2 k \omega_k}{\pi \omega_{k-1} N}u^{k-1}( C^N_r)
$$
where by \eqref{eq_exact_tk} and by \eqref{eq_omega} the last term
$
\precsim \frac {r^{k+1}} N \left(\frac e{k}\right)^{\frac{k}2 +1}$.
\end{proof}

\subsection{Conclusion of the proof of \eqref{eq_limit_of_nu}} 
We note that 
\begin{lemma}
The function $v(r):= \gamma_0(\tube(D,r))$ is analytic at $r=0$.
\end{lemma}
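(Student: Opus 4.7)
The plan is to invoke the standard Weyl tube parameterization in $\RR^d$. Since we have already reduced to $D$ with $C^{1,1}$ boundary, $D$ has positive reach $r_0 > 0$, and for $0 \le r < r_0$ the normal exponential map
\[
\Phi: \partial D \times [0,r) \to \tube(D,r) \setminus D, \qquad (x,s) \mapsto x + s\nu(x)
\]
is a bi-Lipschitz homeomorphism with Jacobian $J(x,s) = \prod_{i=1}^{d-1}(1 + s\kappa_i(x))$. Since the principal curvatures satisfy $|\kappa_i(x)| \le 1/r_0$ almost everywhere, $J(\,\cdot\,,s)$ is a polynomial in $s$ of degree $d-1$ whose coefficients lie in $L^\infty(\partial D)$. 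By the area formula,
\[
v(r) = v(0) + c_d \int_{\partial D} \int_0^r e^{-|x + s\nu(x)|^2/2}\, J(x,s)\, ds\, d\vol_{d-1}(x),
\]
where $c_d$ is the normalizing constant for the Gaussian density.

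The central observation is that for each fixed $x$ the integrand extends to an \emph{entire} function of $s \in \CC$: one has $e^{-|x + s\nu(x)|^2/2} = e^{-|x|^2/2} e^{-s\langle x, \nu(x)\rangle} e^{-s^2/2}$, multiplied by the polynomial $J(x,s)$. The key step is then to show that
\[
F(s) := \int_{\partial D} e^{-|x + s\nu(x)|^2/2}\, J(x,s)\, d\vol_{d-1}(x)
\]
is holomorphic on the disc $\{s \in \CC : |s| < r_0\}$. This is a routine Morera--Fubini argument: the bound $|J(x,s)| \le (1 + |s|/r_0)^{d-1}$ holds uniformly in $x$, while the Gaussian factor is bounded uniformly on $\partial D \times K$ for every compact $K \subset \{|s| < r_0\}$ because $\partial D$ is compact. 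Once $F$ is seen to be holomorphic, its antiderivative $r \mapsto v(0) + c_d \int_0^r F(s)\, ds$ is holomorphic on $\{|r| < r_0\}$, so $v$ is analytic at $r = 0$ with radius of convergence at least $\reach(D)$.

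The only mild obstacle is the $C^{1,1}$ (rather than $C^\infty$) regularity of $\partial D$: the principal curvatures exist only almost everywhere, so one must be careful to interpret $J$ and the integrals in the Lebesgue sense. However the uniform bound $|\kappa_i| \le 1/r_0$ puts $J(\,\cdot\,,s)$ into $L^\infty(\partial D)$ for every complex $s$, which is all that is needed for the holomorphy argument. As a bonus, the Taylor expansion of $F$ at $s = 0$ produces an explicit power series for $v$ whose coefficients are precisely the integrals feeding into the Gaussian intrinsic volumes $\gamma_k(D)$, which is exactly what the subsequent identification of $\lim_N \nu_k(D_N)$ with $\gamma_k(D)$ will require.
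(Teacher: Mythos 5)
Your proof is correct and is essentially the paper's argument: the paper writes $\gamma_0(\tube(D,r))-\gamma_0(D)$ as an integral over $\nc(D)\times[0,r]$ of the pullback of the Gaussian volume form under $(x,v,s)\mapsto x+sv$, which for a body with $C^{1,1}$ boundary is exactly your parameterization of $\tube(D,r)\setminus D$ by $\partial D\times[0,r)$ with Jacobian $\prod_i(1+s\kappa_i)$, and analyticity follows because the integrand is a polynomial in $s$ times $e^{-s\langle x,\nu(x)\rangle}e^{-s^2/2}$. One small correction: positive reach $r_0$ gives only the one-sided bound $\kappa_i\ge -1/r_0$, not $|\kappa_i|\le 1/r_0$; the two-sided $L^\infty$ bound you actually need comes from the $C^{1,1}$ regularity itself (in the paper's reduction $D$ is a tube of radius $\eps$ about the original set, whence $\kappa_i\le 1/\eps$ a.e.), which does not affect the rest of your argument.
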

\begin{proof}
Put
$$
\exp: \RR^d \times \sS^{d-1}_1 \times[0,1]\to \RR^d, \quad \exp(x,v,s):= x+ sv.
$$
If $ r <$ the reach of $D$ then
$$
\gamma_0(\tube(D,r)) = \gamma_0(D) + \frac 1{\sqrt{2\pi d}} \int_{\nc(D)\times[0,r]} \exp^*(e^{-\frac {|x|^2 }2}
dx_1\wedge\dots \wedge dx_d).
$$

\end{proof}

Combining \eqref{eq_nu} and \eqref{eq_sigma_est}
\begin{align*}
|\nu_{2\ell}(D_N)|&\le \sum_{j=0}^{\ell-1} \binom {\ell-1} j |\sigma_{2j+2}|(D_N)\\
&\precsim \sum_{j=0}^{\ell-1} \binom {\ell-1} j (2j+2)^{d-j-
\frac 3 2} e^{2j+2}\\
&\le (2\ell)^de^2(1+e^2)^{\ell-1}
\end{align*}
for $\ell \ge 1$, and, since
$
\sum  \left| \binom {\frac 1 2} j \right| x^j = 2- (1-x)^\frac 1 2
$ for $|x|<1$, 
\begin{align*}
|\nu_{2\ell +1}(D_N)|&\le \sum_{j=0}^\ell \left| \binom {\frac 1 2} j \right|\sum_{k=0}^{\ell-j}  \binom{\ell-j} k| \sigma_{2k+1}|(D_N) \\
&\precsim  \sum_{j=0}^\ell  \left| \binom {\frac 1 2} j \right| \sum_{k=0}^{\ell-j}  \binom{\ell-j} k (2k+1)^{d-k-1} e^{2k+1} \\
&\le (2\ell+1)^d e \sum_{j=0}^\ell  \left| \binom {\frac 1 2} j \right| \sum_{k=0}^{\ell-j}  \binom{\ell-j} k  e^{2k} \\
&=  (2\ell+1)^d e \sum_{j=0}^\ell  \left| \binom {\frac 1 2} j \right| (1+e^2)^{\ell-j}\\
&\le (2\ell+1)^d e (1+e^2)^{\ell}  \left(2-\sqrt{1- (1+e^2)\inv}\right)
\end{align*}

With Proposition \ref{prop_growth} we therefore see that 
there are positive constants $b_0,b_1,\dots$ such that 
\begin{itemize}
\item $\sum b_i r^i$ has 
positive radius of convergence, and
\item $|u^k(\Sigma^N_r) \cdot \nu_k(D_N)| \le b_k r^k$ for all $k,N$ and $0<r<1$.
\end{itemize}
Recalling \eqref{eq_tube} it follows that for each $m$ the Maclaurin polynomial of degree $m$ of $\gamma_0(\tube(D,r))$ is  
$$ \lim_{N\to \infty} \sum_{k=0}^m u^k(\Sigma^N_r) \cdot \nu_k(D_N)
= \lim_{N\to \infty} \sum_{k=0}^m \frac{\omega_k }{\left(2\pi \right)^{\frac k 2} }  \cdot \nu_k(D_N) r^k.
$$
The desired relation  \eqref{eq_limit_of_nu} now follows by induction on $m$.

\section{Concluding remarks}

\subsection{Beyond PR}  Even though  the natural domain of classical integral geometry remains mysterious, we know  \cite {fu_IG} that these ideas apply to a number of  settings (IGregularity classes) beyond PR, such as
compact subanalytic sets and WDC sets. We expect that the results above apply 
in those cases as well, possibly with only minimal modifications of the proofs.

\subsection{The Gaussian convolution algebra} S. Alesker and A. Bernig \cite{ al_ber} have introduced
a natural algebraic operation $(\phi,\psi)\mapsto  \phi*\psi$ called {\it convolution} on suitable spaces of valuations on
euclidean space $\RR^d$. It is natural to ask how the
Gaussian intrinsic volumes $\gamma_i$ fit into this scheme. 

The first observation is that  \cite{al_ber} only develops the theory for compactly supported valuations, which do not include the Gaussian intrinsic volumes. However, one suspects that they belong to a suitably defined space of ``Schwartz class" valuations to which the Alesker-Bernig
definition may be extended. 

Moreover, Theorem 2 of \cite{al_ber} suggests the following conjecture.
Define smooth valuations $\gamma_i^\lambda$ on $\RR^d$ analogously to the $\gamma_i$,
with the unit variance of the centered Gaussian distribution replaced by $\lambda$. Then
\begin{equation}
\gamma^\lambda_i*\gamma_j^\mu = \gamma_{i+j}^{\sqrt{\lambda^2 +\mu^2}}.
\end{equation}

\subsection{The full Adler-Taylor theorem} Adler and Taylor framed the theorem differently,
in terms of an isometric embedding $\Phi=(\phi_0,\dots, \phi_n)$ of a Riemannian manifold
$M$ into $\sS^n_1$, and the resulting random function $\sum_{i=0}^n \xi_i \phi_i$ on $M$ (our framing
is equivalent). The full extent of their theorem goes well beyond this however, giving 
similar conclusions for random functions $\sum_{i=0}^\infty \xi_i\phi_i$ corresponding to an
isometric embedding $(\phi_0,\phi_1,\dots)$ of $M$ into the unit sphere of $\ell^2$. Such random
functions may be characterized axiomatically as the {\it Gaussian random fields} described 
in \cite{adl-tay} and \cite{adl_tay_book}. As Adler and Taylor emphasized, the resulting Lipschitz-Killing valuations $t^k$ on
$M$ are Riemannian invariants, a fact that we call the Weyl principle.

One of the main points of the present paper is to emphasize the essentially valuation-theoretic
nature of Adler-Taylor theory. It would be desirable to give a proof also of the broader Adler-Taylor
theorem in these terms. 

Another intriguing prospect of this subject is a possible avenue to prove the
Weyl principle for PR sets $A$-- that is, that the restrictions to $A$ of the Lipschitz-Killing valuations to any
such $A$ depend only on its intrinsic metric structure. This is known if $A$ is suitably stratified, but this is not the case for general PR sets.

\begin{bibdiv}
	\begin{biblist}

	\bib{adl-tay}{article}
{author={Adler, Robert},
	author={Taylor, Jonathan},
	title={Gaussian processes, kinematic formulae and Poincar\'e's limit}
	journal={Ann. Probab.}
	volume={ 37}
	pages ={ 1459--1482}
	year = {2009}
	}

	\bib{adl_tay_book}{book}
	{author={Adler, Robert},
	author={Taylor, Jonathan},
	title={Random Fields and Geometry}
	publisher={Springer, New York}, 
	year={2007}
	}
	
	\bib{al_ber}{article}
	{author={Alesker, Semyon},
	author={Bernig, Andreas},
	title={Convolution of valuations on manifolds}
	journal={J. Differential Geom. }
	volume={107}
	pages ={203--240 }
	year = {2017}
	}
	
	\bib{chern-lashof}{article}
	{title ={On the total  curvature of immersed manifolds}
	author={S.S. Chern},
	author={ Richard Lashof }
	journal= {American Journal of Mathematics},
	year={1957)},
	volume={79},
	pages={306-318}
	}
	
	\bib{diac_et_al}{article}
	{title ={Finite De Finetti Theorems in Linear Models and Multivariate Analysis}
	author={ Persi W. Diaconis}
	author={ Morris L. Eaton }
	author= {Steffen L. Lauritzen}
	journal={Scandinavian Journal of Statistics}
	year={1992}
	volume={19}
	pages={289--315}}
	
	\bib{crm}{article}
	{author={Fu, Joseph H.G.},
	title={Algebraic integral geometry}
	inbook ={}
	journal={Integral geometry and valuations (Adv. Courses Math. CRM Barcelona)}
	pages={47--112}
	year={2014}
	}
	
	\bib{fu_IG}{article}
	{author={Fu, Joseph H.G.},
	title={Integral geometric regularity}
	inbook ={}
	journal={Tensor Valuations and Their Applications in Stochastic Geometry and Imaging (Lecture Notes in Mathematics)}
	volume ={2177}
	pages={261--299}
	year={2017}
	}
	
	\bib{	fu_wan}{article}
	{title ={Riemannian curvature measures}
	author={Fu, Joseph H.G.}
	author={ Wannerer, Thomas}
	journal={Geom. Funct. Anal.}
	year={2019}
	volume={29}
	pages={343 - 381}}

	\bib{fu_kin_ineq}{article}
	{author={Fu, Joseph H.G.},
	title={A kinematic inequality}
	journal={in preparation}
	}
	
	\bib{kl-ro}{book}{
			author={Klain, Daniel},
			author={Rota, Gian-Carlo},
			title={Introduction to Geometric Probability},
			publisher={Cambridge University Press},
			year={1997}
			review={\MR{1507388}},
		}
		
	\bib{wilf}{book}{
			author={Wilf, Herbert S.},
			title={generatingfunctionology},
			date={1994},
			publisher={Academic Press}
			journal={or online https://www2.math.upenn.edu/~wilf/DownldGF.html},
			review={\MR{1507388}},
		}

	\bib{zahle}{article}
	{author={Z\"ahle, Martina},
	title={Absolute curvature measures},
	journal={Math. Nachr.},
	volume = {140},
	year = {1989)},
	pages={83-90}
	}

		
	\end{biblist}
\end{bibdiv}

\end{document}